\documentclass[leqno,12pt]{amsart} 
\usepackage{amsmath}    
\usepackage{amssymb}  
\usepackage{amscd}
\usepackage{amsthm}  
\usepackage{graphicx}
\usepackage{comment}	
\usepackage{txfonts} 	
\usepackage[dvipdfm,
  colorlinks=false,
  bookmarks=true,
  bookmarksnumbered=false,
  pdfborder={0 0 0},
  bookmarkstype=toc]{hyperref} 

\setlength{\oddsidemargin }{-1pt}
\setlength{\evensidemargin }{-1pt}
\setlength{\textwidth}{460pt}
\numberwithin{equation}{section}
\theoremstyle{plain} 
\newtheorem{thm}{\indent\sc Theorem}[section] 

\newtheorem{conj}[thm]{\indent\sc Conjecture}

\theoremstyle{definition}
\newtheorem{dfn}[thm]{\indent\sc Definition}
\newtheorem{exa[thm]}{\indent\sc Example}

\newtheorem{rem}[thm]{\indent\sc Remark}

\newcommand{\hol}[2] { #1 \rightarrow #2 }
\newcommand{\kah}{K\"{a}hler }

\newcommand{\deldel}{\sqrt{-1}\partial \overline{\partial}}

\begin{document}
\baselineskip=16pt

\title[On the global generation of 
direct images of pluri-adjoint line bundles]
{ On the global generation of 
direct images of pluri-adjoint line bundles}
\author{Masataka Iwai}

\address{Graduate School of Mathematical Sciences, The University of Tokyo, 3-8-1 Komaba,
Tokyo, 153-8914, Japan.}

 \email{{\tt
masataka@ms.u-tokyo.ac.jp}}

\maketitle
\begin{abstract}
We study the Fujita-type conjecture proposed by Popa and Schnell. 
We obtain an effective bound on the global generation
of direct images of pluri-adjoint line bundles
on the regular locus.
We also obtain an effective bound on 
the generic global generation
for a Kawamata log canonical $\mathbb{Q}$-pair.
We use analytic methods such as $L^2$ estimates, $L^2$ extensions and
injective theorems of cohomology groups.
\end{abstract}

\section{Introduction}
The aim of this paper is to give a partial answer to the following conjecture by Popa and
Schnell.
This conjecture is a version of Fujita's conjecture.

\begin{conj} [\cite{PS14} Conjecture 1.3]
\label{conjp}
Let $f \colon \hol{X}{Y} $ be a surjective morphism of smooth projective varieties,
with $Y$ of dimension $n$, and $L$ be an ample line bundle on $Y$.
For any $a \geq 1$, the sheaf 
$$ f_{*}( K_{X}^{\otimes a} ) \otimes L^{\otimes b} 
$$  
is globally generated for all $b \geq a(n+1)$.

\end{conj}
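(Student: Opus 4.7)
The plan is to combine three ingredients: the existence of a positively curved singular Hermitian metric on $f_{*}(K_{X/Y}^{\otimes a})$ coming from fiberwise Bergman kernels, a multiplier-ideal construction that isolates a chosen point $y\in Y$, and a Nadel-type vanishing theorem for torsion-free sheaves equipped with such a metric.

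First I would rewrite the sheaf in a form adapted to Fujita-type estimates. The decomposition $K_{X}^{\otimes a}=K_{X/Y}^{\otimes a}\otimes f^{*}K_{Y}^{\otimes a}$ together with the projection formula gives
\[
f_{*}(K_{X}^{\otimes a})\otimes L^{\otimes b}\;=\;f_{*}(K_{X/Y}^{\otimes a})\otimes K_{Y}^{\otimes a}\otimes L^{\otimes b}.
\]
Write $\mathcal{G}:=f_{*}(K_{X/Y}^{\otimes a})$. By the theorems of Berndtsson and P\u{a}un--Takayama, $\mathcal{G}$ is torsion-free and carries a canonical singular Hermitian metric $h_{\mathcal{G}}$ whose curvature current is Griffiths semi-positive; intuitively, $\mathcal{G}$ behaves like a nef vector bundle, so its twist by a sufficiently positive line bundle should satisfy a Castelnuovo--Mumford style generation criterion. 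This is where the hypothesis "$L$ ample, $b\geq a(n+1)$" has to be spent.

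Next I would attack global generation at a fixed point $y\in Y$ through a multiplier ideal isolating $y$. Using $n+1$ general sections of $L$ and the ampleness of $L$, one produces a singular metric $h_{L}$ on $L$ such that $\mathcal{J}(h_{L}^{\otimes(n+1)})$ cuts out $y$ schematically, at a cost of $n+1$ copies of $L$. To match the pluri-adjoint setting---where each "copy" of $K_{X}$ introduces an extra fiberwise integrability obstruction---I would run this construction $a$ times, consuming $a(n+1)$ copies of $L$ in total. Coupled with the short exact sequence
\[
0\to \mathcal{G}\otimes K_{Y}^{\otimes a}\otimes L^{\otimes b}\otimes \mathcal{J}\to \mathcal{G}\otimes K_{Y}^{\otimes a}\otimes L^{\otimes b}\to \bigl(\mathcal{G}\otimes K_{Y}^{\otimes a}\otimes L^{\otimes b}\bigr)\big|_{y}\to 0,
\]
global generation at $y$ reduces to the vanishing of $H^{1}$ of the leftmost term, which I would obtain from a Nadel-type theorem for torsion-free sheaves with positively curved singular Hermitian metrics (as developed by Cao, P\u{a}un, Matsumura, and others). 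The tensor metric $h_{\mathcal{G}}\otimes h_{K_{Y}^{\otimes a}\otimes L^{\otimes b}}$ is semi-positive, and a careful bookkeeping shows that $b\geq a(n+1)$ is exactly the threshold at which the ample positivity of $L^{\otimes b}\otimes K_{Y}^{\otimes a}$ dominates the curvature defect from $\mathcal{J}$---one uses the implicit Fujita factor $K_{Y}\otimes L^{\otimes(n+1)}$ once per power of $K_{X}$.

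The main obstacle, and the reason the conjecture is still open, is the interaction of $h_{\mathcal{G}}$ with $\mathcal{J}$ on the discriminant locus of $f$. Over the smooth locus of $f$ the metric $h_{\mathcal{G}}$ is smooth and $\mathcal{G}$ is locally free, so the argument goes through cleanly; over singular fibers, however, $\mathcal{J}(h_{\mathcal{G}})$ may be strictly smaller than $\mathcal{O}_{Y}$, and combined with $\mathcal{J}$ it can fail to isolate $y$, while simultaneously $\mathcal{G}$ need not be locally free so that "restriction to $y$" must be interpreted through $\mathcal{G}\otimes k(y)$ and controlled by an analytic extension from $X_{y}$ via Ohsawa--Takegoshi. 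I would try to bypass this by a log-resolution of the discriminant followed by a pushforward argument, but preserving the sharp bound $a(n+1)$ \emph{independently} of the geometry of the discriminant---rather than absorbing it into a constant depending on $f$---is the decisive analytic step, and I expect this to be where any attempt at the full conjecture either succeeds or is forced to retreat to the smooth locus.
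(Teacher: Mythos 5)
The statement you are trying to prove is Conjecture \ref{conjp}, which is an \emph{open conjecture}; the paper does not prove it and only establishes weaker partial results (Theorem \ref{main}: generation at regular values of $f$ for $b \geq \frac{n(n-1)}{2} + a(n+1)$, and Theorem \ref{main2}: generic generation in the log case). So no proposal could be checked against ``the paper's proof'' here, and indeed your own final paragraph concedes that the argument does not close. The decisive gap, however, is not where you locate it (the discriminant locus); it occurs earlier, in the step where you claim that ``using $n+1$ general sections of $L$ and the ampleness of $L$, one produces a singular metric $h_L$ on $L$ such that $\mathcal{J}(h_L^{\otimes (n+1)})$ cuts out $y$ schematically, at a cost of $n+1$ copies of $L$.'' For an ample line bundle that is \emph{not} assumed globally generated, this is essentially Fujita's conjecture itself and is unknown: the best general result is Angehrn--Siu (Theorem \ref{AS} in the paper), which produces a quasi-psh weight isolating $y$ in $V(\mathcal{J}(e^{-\varphi}))$ only at the cost of $N+1 = \frac{n(n+1)}{2}+1$ copies of $L$. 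This quadratic cost is precisely why the paper's bounds carry the extra term $\frac{n(n-1)}{2}$ and why the linear bound $a(n+1)$ remains conjectural. Your device of ``running the construction $a$ times, consuming $a(n+1)$ copies'' does not repair this: iterating the weight does not reduce the per-point cost, and the isolation property is needed once on $Y$, not fiberwise on $X$.

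A second, independent gap is the vanishing step. You invoke ``a Nadel-type theorem for torsion-free sheaves with positively curved singular Hermitian metrics'' to kill $H^1$ of $\mathcal{G} \otimes K_Y^{\otimes a} \otimes L^{\otimes b} \otimes \mathcal{J}$. No such vanishing theorem is available in the generality you need: the Berndtsson--P\u{a}un--Takayama metric on $f_*(K_{X/Y}^{\otimes a})$ is only known to be Griffiths semi-positive, its curvature is a current rather than a form, and Nadel/Demailly--Nakano type vanishing for higher-rank sheaves requires Nakano-type positivity plus strict positivity that semi-positivity does not supply. This is exactly why the paper avoids sheaf-level vanishing altogether and instead works upstairs on $X$: it extends a section from the fiber $X_y$ by an $L^2$ extension theorem (Theorem 14.4 of \cite{HPS17}) applied to $K_X \otimes \overline{L}$ with the Bergman-type metric $h_a$ of Theorem \ref{BP}, and then globalizes by solving a $\overline{\partial}$-equation using the injectivity theorem of \cite{CDM17} with the Angehrn--Siu weight $\psi = \varphi \circ f$. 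If you want a workable route, that fiberwise extension strategy is the one to adopt, and it will return you to the bound $\frac{n(n-1)}{2} + a(n+1)$ rather than $a(n+1)$.
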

In \cite{PS14}, Popa and Schnell proved this conjecture in the case when $L$ is 
 ample and globally generated.
After that,
Dutta removed the global generation assumption on $L$ making 
a statement about generic global generation. 
\begin{thm}[\cite{Dutta17} Theorem A]
\label{Dutta}
Let $(X , \Delta )$ be a Kawamata log canonical $\mathbb{Q}$-pair of a normal projective variety and an effective divisor, 
and $Y$ be a smooth projective $n$-dimensional variety.
Let $f \colon \hol{X}{Y} $ be a surjective morphism,
and $L$ be an ample line bundle on $Y$.
For any $a \geq 1$ such that $a(K_{X} + \Delta ) $ is an integral Cartier divisor, the sheaf 
$$ 
f_{*}\Bigl( \mathcal{O}_X \bigl( a(K_{X} + \Delta ) \bigr) \Bigr) \otimes L^{\otimes b} 
$$  
is generated by the global sections at a general point $y \in Y$ either
\begin{enumerate}
\item for all $b \geq a \bigl( \frac{n(n + 1)}{2} +1 \bigr) $, or
\item for all $b \geq a(n + 1)$ when $n \le 4$.
\end{enumerate}
\end{thm}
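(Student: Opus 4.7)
The plan is to combine the Popa--Schnell strategy with an Angehrn--Siu type construction of singular metrics on $L$, in order to drop the global generation hypothesis at the cost of only obtaining generic global generation. First I would fix a general point $y \in Y$ lying outside the discriminant of $f$ and the non-flat locus of $f_{*}\mathcal{O}_X\bigl(a(K_X+\Delta)\bigr)$, so that its stalk at $y$ is computed by $H^0$ on the fiber $F = f^{-1}(y)$ and the pair restricts to a Kawamata log canonical pair $(F, \Delta|_F)$. By a standard base change argument, it then suffices to produce enough global sections of $\mathcal{O}_X\bigl(a(K_X+\Delta)\bigr) \otimes f^{*}L^{\otimes b}$ on $X$ whose restrictions to $F$ span $H^0\bigl(F, \mathcal{O}_F(a(K_F+\Delta|_F))\bigr)$.

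Next, following Angehrn--Siu (and its refinements by Helmke, Heier, and Ein--Lazarsfeld), since $L$ is ample on the smooth variety $Y$, I would construct an effective $\mathbb{Q}$-divisor $D \equiv_{\mathbb{Q}} cL$ with $c$ slightly less than $\tfrac{n(n+1)}{2}+1$ such that the pair $(Y, D)$ has an isolated non-klt center at $y$. The classical bound $\tfrac{n(n+1)}{2}$ accounts for case $(1)$, while the stronger Kawamata/Helmke-type bound $n+1$ for $n \le 4$ gives case $(2)$. Pulling back via $f$ yields $f^{*}D$ on $X$, and on the smooth locus of $f$ the associated multiplier ideal is the pullback of $\mathcal{J}(Y,D)$, so that $f^{*}D$ cuts out the fiber $F$ in a neighborhood.

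Then I would apply a vanishing/injectivity argument to the line bundle $a(K_X+\Delta) + af^{*}L^{\otimes c}$ twisted by the appropriate multiplier/adjoint ideal sheaf to deduce that the restriction map
\[
H^0\bigl(X, \mathcal{O}_X(a(K_X+\Delta)) \otimes f^{*}L^{\otimes b}\bigr) \longrightarrow H^0\bigl(F, \mathcal{O}_F(a(K_F+\Delta|_F))\bigr)
\]
is surjective for $b \ge a\bigl(\tfrac{n(n+1)}{2}+1\bigr)$ (respectively $b \ge a(n+1)$ in low dimension). Since the pair is only log canonical, I would use Fujino's injectivity theorem for lc pairs rather than Nadel/Kawamata--Viehweg vanishing, combined with an Ohsawa--Takegoshi type extension on the smooth locus. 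Pushing forward then realizes these sections as local generators of $f_{*}\bigl(\mathcal{O}_X(a(K_X+\Delta))\bigr) \otimes L^{\otimes b}$ at $y$.

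The hard part, in my view, is the interplay between the Angehrn--Siu construction on $Y$ and the extension on the fiber. One must calibrate $D$ so that after pullback the pair $\bigl(X, \Delta + af^{*}D\bigr)$ still has non-klt center exactly equal to $F$ near $F$, and so that the induced adjoint ideal on $F$ is trivial, which is what allows every pluri-adjoint section on $F$ to lift. Managing this compatibility in the log canonical (not klt) setting, where multiplier ideals are replaced by more delicate ideals and Kollár's injectivity must be used in place of vanishing, is where the main technical difficulty lies and where the precise numerical bound on $b$ is forced.
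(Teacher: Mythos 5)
You should first note that the paper does not actually prove this statement: it is quoted verbatim from Dutta's work \cite{Dutta17} as background, and the paper's own contributions (Theorems \ref{main} and \ref{main2}) are proved by a different, analytic route, namely the relative Bergman kernel metric of Berndtsson--P\u{a}un/Cao on $K_{X/Y}^{\otimes a}$ (Theorem \ref{BP}), an $L^2$ extension theorem, and the Cao--Demailly--Matsumura injectivity theorem, with the Angehrn--Siu quasi-psh function playing the role of the isolated non-klt center. Your outline instead follows the algebraic line of Popa--Schnell and Dutta (Angehrn--Siu type divisors on $Y$ with isolated non-klt center at $y$, Koll\'ar/Fujino injectivity, lifting sections from a general fiber), which is indeed the framework behind the quoted bounds, and you correctly trace the factor $a$ in $b \ge a\bigl(\frac{n(n+1)}{2}+1\bigr)$ to combining the Angehrn--Siu constant with a Popa--Schnell type reduction.

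However, there is a genuine gap at your central step. You propose to ``apply a vanishing/injectivity argument to $a(K_X+\Delta)+acf^{*}L$ twisted by the appropriate multiplier/adjoint ideal,'' but every theorem available here (Nadel, Kawamata--Viehweg, Koll\'ar, Fujino) applies only to bundles of adjoint type $K_X+(\text{boundary})+(\text{nef or semipositive part})$, not to $a$-th powers. To reduce to that shape you must absorb $(a-1)\bigl(K_X+\Delta+\tfrac{b}{a}f^{*}L\bigr)$ into the boundary, i.e.\ you need an effective $\mathbb{Q}$-divisor (or a singular metric with semipositive curvature) in this class whose singularities are mild along the general fiber, so that the pair stays log canonical there and the sections of $\mathcal{O}_F\bigl(a(K_F+\Delta|_F)\bigr)$ are not killed by the resulting multiplier/adjoint ideal. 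Producing this auxiliary object is precisely the crux of all known proofs: Popa--Schnell obtain it from the hypothesis that $L$ is globally generated, Dutta obtains it from weak positivity of $f_{*}\mathcal{O}_X\bigl(a(K_X+\Delta)\bigr)$ (Campana--Fujino) via Viehweg's fiber-product trick, and the present paper obtains it analytically from the metric $h_a$ of Theorem \ref{BP} together with the fiberwise bound $|s|_{h_a}^{2/a}\le\int_{X_y}|s|^{2/a}$. Your sketch never says where this divisor or metric comes from, and without it the surjectivity of the restriction map does not follow from Fujino's injectivity, so the bound on $b$ is not actually established. A secondary, smaller issue: for case (2) the bound $a(n+1)$ for $n\le 4$ does not follow from the Angehrn--Siu construction you describe; it requires the low-dimensional Fujita-type results (Reider, Ein--Lazarsfeld, Kawamata, Helmke) giving an isolated non-klt center at a general point with coefficient below $n+1$, which you gesture at but would need to state and use precisely.
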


On the other hand, Deng obtained a linear bound for $b$ 
when $a$ is large by using 
analytic methods.

\begin{thm}[\cite{Deng17} Theorem C]
With the above notation and in the setting of Theorem \ref{Dutta},
for any $a \geq 1$ such that $a(K_{X} + \Delta ) $ is an integral Cartier divisor, the sheaf 
$$ 
f_{*}\Bigl( \mathcal{O}_X \bigl( a(K_{X} + \Delta ) \bigr) \Bigr) \otimes L^{\otimes b} 
$$  
is generated by the global sections at a general point $y \in Y$ either
\begin{enumerate}
\item for all $b \geq n^2 - n + a(n+1) $, or
\item for all $b \geq n^2  + 2$ when $K_Y$ is pseudo-effective.
\end{enumerate}
\end{thm}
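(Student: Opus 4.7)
My plan is to combine an analytic injectivity theorem on $Y$ with the Berndtsson--P\u{a}un--Takayama positivity of direct images for klt pairs. Writing $\mathcal{F} := f_*\mathcal{O}_X(a(K_X+\Delta))$, which is a torsion-free coherent sheaf on $Y$, global generation of $\mathcal{F} \otimes L^b$ at a general $y \in Y$ is equivalent, via the $\mathfrak{m}_y$-ideal sheaf sequence and the long exact cohomology sequence, to injectivity of the natural map $H^1(Y, \mathcal{F}\otimes L^b\otimes \mathfrak{m}_y) \to H^1(Y, \mathcal{F} \otimes L^b)$. So the task reduces to proving this injectivity for $b$ in the stated range.

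On the open locus where $f$ is smooth, $\mathcal{F}$ carries a canonical singular hermitian metric $h_{\mathcal{F}}$ of semipositive curvature built from fiberwise $m$-Bergman kernels (following Berndtsson--P\u{a}un, Takayama, Cao--P\u{a}un), a consequence of the klt hypothesis on $(X,\Delta)$. I would then apply an analytic injectivity theorem of Enoki--Ohsawa--Fujino--Matsumura type to $(\mathcal{F}\otimes L^b, h_{\mathcal{F}}\otimes h_L^b)$. Using $\mathcal{F} = f_*\bigl(\omega_{X/Y}^a(a\Delta)\bigr) \otimes \omega_Y^a$, the curvature budget supplied by $L^b$ must simultaneously (i) compensate the factor $\omega_Y^a$ and (ii) produce an isolated log-canonical center at $y$ (Angehrn--Siu style) so as to generate the ideal $\mathfrak{m}_y$. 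Part (ii) combined with a Castelnuovo--Mumford iteration on $Y$ absorbs $n^2-n$ copies of $L$, while part (i) costs an additional $a(n+1)$ copies because $-K_Y$ is in general only dominated by $(n+1)L$ via ampleness of $L$. When $K_Y$ is pseudo-effective, the burden of (i) can be absorbed into a pseudo-effective singular metric on $K_Y$ independently of $a$, which collapses the $a$-dependence and produces the improved constant bound $n^2+2$.

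The central obstacle will be the construction of a single coherent singular hermitian metric that simultaneously (a) carries the required isolated log-canonical center at $y$ on the base $Y$, (b) has trivial multiplier ideal along $\Delta$ on $X$ so that the klt condition is respected, and (c) satisfies the precise positivity hypothesis of the chosen analytic injectivity theorem on the pair $(\mathcal{F}\otimes L^b, h_{\mathcal{F}}\otimes h_L^b)$. Bridging the base-level injectivity statement on $Y$ and the klt positivity on $X$---that is, transporting curvature information across $f$---is where the Ohsawa--Takegoshi $L^2$ extension and the refined $L^2$ estimates flagged in the abstract enter, and verifying that the various curvature corrections add up precisely to the claimed linear bound is the most delicate bookkeeping step.
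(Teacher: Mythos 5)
Your reduction of generic global generation to the injectivity of $H^1\bigl(Y,\mathcal{F}\otimes L^{\otimes b}\otimes\mathfrak{m}_y\bigr)\to H^1\bigl(Y,\mathcal{F}\otimes L^{\otimes b}\bigr)$ is fine, but the step you then rely on does not exist: there is no Enoki--Ohsawa--Fujino--Matsumura type injectivity (or vanishing) theorem for the pair $(\mathcal{F}\otimes L^{\otimes b}, h_{\mathcal{F}}\otimes h_L^{b})$, because $h_{\mathcal{F}}$ is a singular Hermitian metric on a torsion-free sheaf of higher rank. For such metrics one only has Griffiths-type semipositivity in the sense of plurisubharmonicity of $\log|u|^2_{h_{\mathcal{F}}^*}$; there is no well-defined curvature current, no multiplier ideal with the required restriction and openness properties, and consequently none of the $L^2$/harmonic-theory machinery that injectivity theorems require. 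This is exactly the obstacle you flag in your last paragraph, but it is not a bookkeeping issue to be "verified" -- it is the entire content of the problem, and no argument is offered to bridge it. A symptom that something is wrong with the plan as stated: if one could run Angehrn--Siu on $Y$ against a semipositively curved metric on $\mathcal{F}$ in this way, the bound would come out as $b\ge \frac{n(n+1)}{2}+1$ independently of $a$, which is stronger than anything known; the constants $n^2-n+a(n+1)$ and $n^2+2$ you are asked to prove are never actually derived in your sketch (the claim that a "Castelnuovo--Mumford iteration absorbs $n^2-n$ copies of $L$" is asserted, not proved).

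The statement you were given is Deng's theorem, which the paper only quotes; but both Deng's proof and the paper's proofs of its sharper bounds avoid your difficulty by working upstairs on $X$ (or on a log resolution $X'$) with \emph{line bundles}, never with the direct image sheaf itself. Concretely: fix a general (regular) $y$ and a section $s$ of the pluri-adjoint bundle on the fibre; the fibrewise Bergman-kernel metric $h_a$ on $K_{X/Y}^{\otimes a}$ (Berndtsson--P\u{a}un, P\u{a}un--Takayama, Cao in the log setting) bounds $|s|^2_{h_a}$ on $X_y$, so an Ohsawa--Takegoshi/$L^2$ extension gives $S_U$ on a neighbourhood $U=f^{-1}(V)$ of the fibre with $S_U|_{X_y}=s$; one then globalizes by solving $\overline\partial F=\overline\partial(\widetilde\rho S_U)$ with weight $e^{-\psi}$, $\psi=\varphi\circ f$ the pullback of the Angehrn--Siu quasi-psh function on $Y$, using the injectivity theorem of Cao--Demailly--Matsumura for line bundles with singular metrics, and checks $F|_{X_y}\equiv 0$ because the multiplier ideal of $e^{-\varphi}$ is isolated at $y$. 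The $a(n+1)$ (resp.\ the constant $2$ when $K_Y$ is pseudo-effective) enters through the factorization $K_X^{\otimes a}\otimes f^{*}L^{\otimes b}=K_X\otimes K_{X/Y}^{\otimes(a-1)}\otimes f^{*}\bigl((K_Y\otimes L^{\otimes n+1})^{\otimes(a-1)}\bigr)\otimes f^{*}L^{\otimes(\cdot)}$, using semiampleness of $K_Y\otimes L^{\otimes n+1}$ (resp.\ bigness of $K_Y^{\otimes(a-1)}\otimes L$). Your proposal correctly identifies the ingredients ($L^2$ extension, Angehrn--Siu, Bergman positivity, an injectivity theorem) but assembles them on the wrong space and at the wrong level of generality, and so leaves the essential analytic step unproved.
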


Now we state our results.
First, we treat the case when $X $ is smooth and  $\Delta = 0$.
In \cite{Dutta17}, Dutta proved that
if $K_{X}^{\otimes a}$ is relatively free on the regular locus of $f$, 
 $ f_{*}( K_{X}^{\otimes a} ) \otimes L^{\otimes b} $ is 
generated by the global sections at any regular value of $f$ for all $b \geq a \bigl( \frac{n(n + 1)}{2} +1 \bigr) $
.
In this paper, we can remove this assumption and obtain a better bound for $b$.

\begin{thm}
\label{main}
Let $f \colon \hol{X}{Y} $ be a surjective morphism of smooth projective varieties,
with $Y$ of dimension $n$, and $L$ be an ample line bundle on $Y$.
If $y$ is a regular value of $f$, then for any $a \geq 1$ the sheaf 
$$ f_{*}(K_{X}^{\otimes a} ) \otimes L^{\otimes b} 
$$  
is generated by the global sections at $y$ for all $b \geq \frac{n(n-1)}{2} + a(n+1)$.
\end{thm}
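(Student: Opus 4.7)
The plan is to recast global generation of $f_*(K_X^{\otimes a})\otimes L^{\otimes b}$ at the regular value $y\in Y$ as an $L^2$ extension problem and solve it by iterated Ohsawa--Takegoshi extension along a flag of smooth subvarieties of $X$. Because $y$ is a regular value, the fiber $X_y$ is smooth of codimension $n$, base change identifies $(f_*(K_X^{\otimes a}))\otimes \kappa(y)$ with $H^0(X_y,K_{X_y}^{\otimes a})$, and adjunction gives $K_{X_y}^{\otimes a}\cong (K_X^{\otimes a}\otimes f^*L^{\otimes b})|_{X_y}$ since $f^*L|_{X_y}$ is trivial. Hence the theorem reduces to showing that every section of $K_{X_y}^{\otimes a}$ lifts to a global section of $K_X^{\otimes a}\otimes f^*L^{\otimes b}$ whenever $b\ge \tfrac{n(n-1)}{2}+a(n+1)$.

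To perform the lift, I would build a flag of smooth subvarieties $X_y=Z_n\subset Z_{n-1}\subset\cdots\subset Z_0=X$ in which each $Z_k$ is a smooth divisor inside $Z_{k-1}$; concretely by cutting $Y$ by general members of a suitable power of $|L|$, or more intrinsically by fixing a singular Hermitian metric on $f^*L$ whose multiplier ideal sheaf is supported exactly along $X_y$. I would then apply an Ohsawa--Takegoshi-type $L^2$ extension from $Z_k$ up to $Z_{k-1}$ and iterate. Rewriting $K_{Z_k}^{\otimes a}$ in terms of $K_X^{\otimes a}|_{Z_k}$ through the adjunction formula $K_{Z_k}=K_X|_{Z_k}\otimes f^*L^{\otimes k}|_{Z_k}$ and absorbing the extra $f^*L$-twist produced at each extension step, the accumulated twists come out to $\sum_{k=1}^{n-1}k=\tfrac{n(n-1)}{2}$ from the extensions themselves, plus $a(n+1)$ from the pluri-adjoint factor together with the base-case positivity needed to initiate the iteration.

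The main technical obstacle is supplying the positivity that each Ohsawa--Takegoshi extension requires: the natural twisting factor at step $k$ involves $K_X^{\otimes(a-1)}$, which need not be pseudo-effective in general. I would address this by endowing the relative pluricanonical sheaf $f_*K_{X/Y}^{\otimes a}$ with a singular Hermitian metric of semipositive curvature coming from the fiberwise $m$-th Bergman kernels (in the spirit of Berndtsson and Paun), and then invoking an analytic injectivity theorem of Koll\'ar--Matsumura type to translate each extension obstruction into a cohomology group that vanishes in the stated range. The most delicate bookkeeping step is verifying that the multiplier ideals of these auxiliary metrics are trivial along the flag but strictly positive transversally, so that every Ohsawa--Takegoshi step succeeds and the cumulative twist count stays exactly additive, yielding the announced bound.
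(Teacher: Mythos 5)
Your opening reduction is correct and agrees with the paper: since $y$ is a regular value, it suffices to extend every section of $K_{X_y}^{\otimes a}\otimes f^*L^{\otimes b}|_{X_y}$ to $X$. But the central mechanism you propose does not work under the stated hypotheses. The flag $X_y=Z_n\subset\cdots\subset Z_0=X$ obtained by cutting $Y$ with general members of $|L|$ through $y$ is unavailable: $L$ is only assumed ample, not globally generated or very ample, so $|L|$ need not contain any divisor through $y$ (let alone smooth ones meeting transversally), and this is precisely the difficulty the theorem addresses --- when $L$ is ample and globally generated the bound $a(n+1)$ is already Popa--Schnell. Replacing $|L|$ by $|mL|$ with $m\gg0$ destroys effectivity, since each adjunction step then costs $m$ copies of $L$, not $k$, so your count $\sum_{k=1}^{n-1}k=\frac{n(n-1)}{2}$ ``from the extensions themselves'' has no justification. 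Your fallback --- a singular metric on (a power of) $L$ whose multiplier ideal is supported along $X_y$ --- is exactly the Angehrn--Siu construction, which is a one-shot device costing $N+1=\frac{n(n+1)}{2}+1$ twists of $L$ at the point $y$ and produces no flag of smooth divisors to iterate Ohsawa--Takegoshi along; the two halves of your plan do not fit together, and the quadratic term in the bound is never actually accounted for.

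The source of the linear term $a(n+1)$ is also missing. In the paper one writes $K_X^{\otimes a}\otimes f^*L^{\otimes b}=K_X\otimes\overline{L}$ with $\overline{L}=K_{X/Y}^{\otimes(a-1)}\otimes f^*(K_Y\otimes L^{\otimes n+1})^{\otimes(a-1)}\otimes f^*L^{\otimes N+1+\overline{b}}$: the factor $K_{X/Y}^{\otimes(a-1)}$ carries the Berndtsson--P\u{a}un/P\u{a}un--Takayama Bergman-kernel metric (a metric on the line bundle $K_{X/Y}^{\otimes a}$, not on the direct image sheaf as you write), while the leftover $K_Y^{\otimes(a-1)}$ on the base is absorbed because $K_Y\otimes L^{\otimes n+1}$ is semiample by Kawamata base-point freeness; this is where $(a-1)(n+1)$ copies of $L$ go, and your phrase ``$a(n+1)$ from the pluri-adjoint factor together with the base-case positivity'' does not identify this step. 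The paper's proof is then a single-step argument, not an iteration: one $L^2$ extension of $s$ to a neighborhood $f^{-1}(V)$ of the fiber, followed by solving $\overline{\partial}F=\overline{\partial}(\widetilde{\rho}S_U)$ with the weight $e^{-\psi}$, $\psi=\varphi\circ f$ the pullback of the Angehrn--Siu function, where the Cao--Demailly--Matsumura injectivity theorem (used to compare the two multiplier-ideal cohomology groups, not to prove a vanishing) shows the obstruction class is zero with the weight, and the singularity of $\psi$ over $y$ forces $F|_{X_y}\equiv0$, so $\widetilde{\rho}S_U-F$ is the desired global extension. As it stands, your proposal has a genuine gap at the flag construction and at the bookkeeping that produces both terms of the bound.
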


In this paper, $X_y$ is a smooth connected variety  for any regular value $y \in Y$.
In particular, if $f$ is smooth, $ f_{*}(K_{X}^{\otimes a} ) \otimes L^{\otimes b} $
is globally generated for all $b \geq \frac{n(n-1)}{2} + a(n+1)$.
We give a partial answer to Conjecture \ref{conjp}.

Second, we treat a log case.
In this case, we obtain the same bound as Theorem \ref{main} about generic global generation
even when $X$ is a complex analytic variety.

\begin{thm}
\label{main2}
Let $(X , \Delta) $ be a Kawamata log terminal $\mathbb{Q}$-pair of a normal complex analytic variety in Fujiki's class $\mathcal{C}$ and an effective divisor, 
and $Y$ be a smooth projective $n$-dimensional variety.
Let $f \colon \hol{X}{Y} $ be a surjective morphism,
and $L$ be an ample line bundle on $Y$.
For any $a \geq 1$ such that $a(K_{X} + \Delta ) $ is an integral Cartier divisor, the sheaf 
$$ 
f_{*}\Bigl( \mathcal{O}_X \bigl( a(K_{X} + \Delta ) \bigr) \Bigr) \otimes L^{\otimes b} 
$$  
is generated by the global sections at a general point $y \in Y$ either 

\begin{enumerate}
\item for all $b \geq \frac{n(n-1)}{2} + a(n+1) $
, or
\item for all $b \geq \frac{n(n-1)}{2} + 2$ when $K_Y$ is pseudo-effective.
\end{enumerate}
\end{thm}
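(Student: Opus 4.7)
\emph{Reduction to the smooth K\"ahler case.} My plan is to combine the Ohsawa--Takegoshi $L^2$ extension theorem with an Angehrn--Siu--Koll\'ar construction of an isolated log canonical center at a general point, performed after reducing to a K\"ahler resolution. First I take a log resolution $\mu\colon \tilde X\to X$ of $(X,\Delta)$; since $X$ lies in Fujiki's class $\mathcal{C}$, I may arrange $\tilde X$ to be smooth and K\"ahler. Writing $\mu^{*}(K_{X}+\Delta)=K_{\tilde X}+\tilde\Delta$ with $(\tilde X,\tilde\Delta)$ KLT of SNC support, standard log-terminal direct image identities let me replace $(X,\Delta)$ by its resolution. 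For a general $y\in Y$ one may further assume that $f$ is smooth at $y$, that $X_{y}$ is smooth, and that $X_{y}$ meets $\tilde\Delta$ transversally.

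\emph{Reformulation as an extension problem.} Global generation of $f_{*}\mathcal{O}_{X}(a(K_{X}+\Delta))\otimes L^{b}$ at $y$ is equivalent to surjectivity of the restriction map
\[
H^{0}\bigl(X,\, a(K_{X}+\Delta)+f^{*}L^{b}\bigr)\longrightarrow H^{0}\bigl(X_{y},\, a(K_{X}+\Delta)|_{X_{y}}\bigr)\otimes (L^{b})_{y},
\]
so it is enough to extend every $s\in H^{0}(X_{y}, a(K_{X}+\Delta)|_{X_{y}})$ to a global section of $a(K_{X}+\Delta)+f^{*}L^{b}$. Rewriting this line bundle as $K_{X}+M$ with $M=(a-1)(K_{X}+\Delta)+\Delta+f^{*}L^{b}$, my goal becomes to build a semi-positively curved singular Hermitian metric $h_{M}$ on $M$ with $\mathcal{J}(h_{M})|_{X_{y}}=\mathcal{O}_{X_{y}}$ and with respect to which $s$ is $L^{2}$ on $X_{y}$.

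\emph{Construction of $h_{M}$.} I would assemble $h_{M}$ out of four pieces: (a) the current of integration along $\Delta$, which is KLT by hypothesis; (b) the metric $|s|^{2(a-1)/a}$ on $(a-1)(K_{X}+\Delta)$ induced by a local extension of $s$; (c) the pullback through $f$ of an Angehrn--Siu--Koll\'ar singular metric $h_{c}$ on $L^{c}$, with $c=\frac{n(n-1)}{2}+1$, carving out an isolated log canonical component at the general point $y\in Y$ (the general-point version of Fujita freeness produces the $\frac{n(n-1)}{2}$ term rather than the uniform $\frac{n(n+1)}{2}$); and (d) a smooth positively curved metric on the residual $L^{b-c}$. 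In case (1), $b-c\geq a(n+1)-1$ supplies enough positivity to absorb the $K_{Y}$-contribution implicit in pushing forward adjoint sheaves, while in case (2) the pseudo-effectivity of $K_{Y}$ makes $b-c\geq 1$ sufficient. An Ohsawa--Takegoshi extension theorem with singular weights (in the form of Cao or Demailly--Hiep) then produces the desired extension of $s$ to all of $X$.

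\emph{Main obstacle.} The principal difficulty is verifying the $L^{2}$ hypothesis of the extension theorem with this compound singular metric. Two points are delicate: first, integrability of $s$ against $h_{M}|_{X_{y}}$, which relies on the KLT condition for $\tilde\Delta|_{X_{y}}$ and thus on the Bertini-type transversality available at a general $y$; and second, the cleanness of the pulled-back Angehrn--Siu metric in a horizontal neighborhood of $X_{y}$, which is needed so that $\mathcal{J}(h_{M})$ is trivial along the whole fiber rather than just at one point. Combined with the extra care required by Fujiki's class --- descending the extended section on $\tilde X$ back to a genuine section of $f_{*}\mathcal{O}_{X}(a(K_{X}+\Delta))\otimes L^{b}$ --- these issues constitute the technical heart of the argument.
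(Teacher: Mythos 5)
Your outline follows the right general strategy (log resolution of the klt pair, reformulation as extension of sections from a general fiber, an Angehrn--Siu type weight pulled back from $Y$, and an $L^2$ argument), but it omits the one ingredient that makes this scheme run: a \emph{globally defined} singular Hermitian metric with semipositive curvature current on the relative pluri-adjoint part. Your item (b), the weight $|s|^{2(a-1)/a}$ coming from ``a local extension of $s$'', only exists on a neighborhood $f^{-1}(V)$ of the fiber and is moreover circular (such an extension is part of what must be produced), so it cannot serve as the weight in an Ohsawa--Takegoshi extension from $X_y$ to all of $X$, nor in any global $\overline{\partial}$ or injectivity argument. The paper supplies exactly this missing input via the fiberwise Bergman kernel metric of Berndtsson--P\u{a}un/P\u{a}un--Takayama, in Cao's K\"ahler twisted version for $K_{X'/Y}^{\otimes a}\otimes\mathcal{O}_{X'}(\sum a\beta_jF_j)$ on the resolution; this gives global semipositivity together with the bound $|s|^{2/a}_{h_{a,B}}\le\int_{X'_y}|s|^{2/a}_{h_F}$ needed for the $L^2$ hypotheses. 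It is also the reason one must first split $K_{X'}$ as $K_{X'/Y}\otimes f^*K_Y$: the factor $f^*K_Y^{\otimes(a-1)}$ is then absorbed into $(K_Y\otimes L^{\otimes n+1})^{\otimes(a-1)}$, semiample by Kawamata's base point free theorem, in case (1), and into the big line bundle $K_Y^{\otimes(a-1)}\otimes L$ in case (2). Your sentence about ``absorbing the $K_Y$-contribution implicit in pushing forward adjoint sheaves'' gestures at this but never performs the decomposition, and without the Bergman metric there is no positivity to absorb anything into.

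The second gap is the source of your numerics: you take $c=\frac{n(n-1)}{2}+1$ and appeal to a ``general-point version of Fujita freeness''. No such improvement of Angehrn--Siu is proved, cited, or (to my knowledge) available; the paper uses the usual Angehrn--Siu weight with $N+1=\frac{n(n+1)}{2}+1$ copies of $L$ at an \emph{arbitrary} point $y$, and the apparent saving to $\frac{n(n-1)}{2}$ in case (1) is pure bookkeeping: since $a\ge 1$ one has $a(n+1)+\frac{n(n-1)}{2}=(a-1)(n+1)+N+1$, so the extra $(n+1)$ hidden in $a(n+1)$ tops up the Angehrn--Siu budget. Thus case (1) is reachable without your claimed generic improvement, but as written your derivation of the bound rests on an unproved statement, and your route to case (2) depends on it entirely (note that even the paper's own case (2) argument spends the full budget, setting $\overline{b}=b-N-2$ there). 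Finally, be aware that on a compact K\"ahler $X'$ the extension is not carried out by a single Ohsawa--Takegoshi application against the compound weight: because the multiplier ideal of the Angehrn--Siu weight $\psi=\varphi\circ f$ is nontrivial, the paper first extends locally over $f^{-1}(V)$, then corrects by solving $\overline{\partial}F=\overline{\partial}(\widetilde{\rho}S_U)$ with weight $h_{\overline{L}}e^{-\psi}$ using the Cao--Demailly--Matsumura injectivity theorem, and uses that $y$ is isolated in $V(\mathcal{J}(e^{-\varphi}))$ to force $F|_{X_y}\equiv 0$; some mechanism of this kind (or a genuinely singular-weight extension theorem) has to replace your single-step appeal.
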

\begin{rem}
After the author submitted this paper to arXiv,
Dutta told the author that she and Murayama obtained the same bounds as in Theorem \ref{main2} (1) in 
\cite[Theorem B]{DM}  by using the algebraic geometric methods  when $X$ is a normal projective variety.
Also, in \cite[Theorem B]{DM}, they obtained the linear bound when $(X , \Delta) $ is a log canonical 
$\mathbb{Q}$-pair.
For more details, we refer the reader to \cite{DM}.
\end{rem}

The organization of the paper is as follows. In Section 2, we review
some of the standard facts on an effective freeness of adjoint bundles
and a positivity of a relative canonical line bundle.
In Section 3, we prove Theorem \ref{main}.
In Section 4, we prove Theorem \ref{main2} by using the methods of Section 3 and \cite{Deng17}.

{\bf Acknowledgment. } 
The author would like to thank his supervisor Prof. Shigeharu Takayama for helpful comments and enormous support.
He wishes to thank Genki Hosono for several useful discussions.
He also wishes to thank Yajnaseni Dutta for letting him know their results
in \cite{DM} and sending 
their preprint.
He is greatly indebted to the referee for many kind advices.
This work is supported by the Program for Leading Graduate Schools, MEXT, Japan. This work is also supported by JSPS KAKENHI Grant Number 17J04457.

\section{Preliminary}

\begin{dfn}\cite[Chapter 13.]{Dem}
A function $\varphi$ : $\hol{X}{ [ -\infty , +\infty) } $ on a complex manifold $X$ is said to
be { \it quasi-plurisubharmonic } ({\it quasi-psh}) if $\varphi$ is locally the sum of a 
psh function and of a smooth function.
In addition, we say that $\varphi$ has { \it neat analytic singularities }
if for any $x \in X$ there exists an open neighborhood $U$ of $x$ such that 
$\varphi$ can be written
$$
\varphi = c \log \sum_{ 1 \le k \le L}| g_k(z) |^2 + w(z)
$$
on $U$ where $c\geq0$, $g_k \in \mathcal{O}_X(U)$ for any $k = 1 , \dots , L$ and $w \in \mathcal{C}^{\infty}(U)$.
\end{dfn}

\begin{dfn}\cite[Chapter 5.]{Dem}
If $\varphi$ is a quasi-psh function on a complex manifold $X$,
the multiplier ideal sheaf $\mathcal{J}( e^{- \varphi}) $ is a coherent
subsheaf of $\mathcal{O}_X$ defined by
$$
\mathcal{J}( e^{- \varphi})_x \coloneqq
\{ f \in \mathcal{O}_{ X , x} ; \exists U \ni x , \int_{U} | f |^2 e^{ - \varphi} d \lambda < \infty \},
$$
where $U$ is an open coordinate neighborhood of $x$, and $d \lambda$ is the standard Lesbegue
measure in the corresponding open chart of $\mathbb{C}^n$. 

\end{dfn}

In this paper we will denote $N \coloneqq \frac{n(n+1)}{2}$.
Angehrn and Siu proved the existence of a quasi-psh function whose multiplier ideal sheaf has
isolated zero set at $y$ when we pick one point $y \in Y$.
\begin{thm} \cite{AS95}
\label{AS}
Let $Y$ be a smooth projective $n$-dimensional variety, and 
We fix $m \in \mathbb{N}$ such that $m(N+1) L$ is very ample.
We choose a \kah form $\omega_Y$ on $Y$ and a smooth positive metric $h_L$ on $L$ 
such that 
$\sqrt{-1} \Theta _{L , h_L} = \frac{1}{m(N+1)} \omega_{Y} $, where $N = \frac{n(n + 1)}{2}$.
Then for any point $y \in Y$, there exist a quasi-psh function $\varphi$ with neat analytic singularities on $Y$ and a positive number $0 < \varepsilon_0 < 1$, such that 

\begin{enumerate}
\item 
$\sqrt{-1} \Theta _{ L^{\otimes N+1}  h_{L}^{N+1} } +  \deldel \varphi \geq 
\frac{1 - \varepsilon_0}{m(N+1)}
 \omega_{Y} $
\item 
\label{2}
$y$ is an isolated point in the zero variety $V( \mathcal{J}(e^{-\varphi}) )$.

\end{enumerate}
\end{thm}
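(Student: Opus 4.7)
The plan is to construct $\varphi$ by the Angehrn--Siu inductive dimension-reduction procedure. The strategy is to build a singular weight on $L^{\otimes N+1}$ whose non-integrability locus contains $y$, then progressively shrink this locus until $\{y\}$ becomes an isolated component of $V(\mathcal{J}(e^{-\varphi}))$, while keeping the added curvature strictly below the allotted threshold $\frac{1-\varepsilon_0}{m(N+1)}\,\omega_Y$.

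First I would produce an initial weight. Since $m(N+1)L$ is very ample, a dimension count on $H^0(Y, k \cdot m(N+1)L)$ provides, for $k$ sufficiently large, sections vanishing at $y$ to order at least $(n+1)k$. The logarithm of the sum of their squared pointwise norms (in a fixed local frame), appropriately rescaled, yields a quasi-psh function $\varphi_0$ with neat analytic singularities whose $\deldel \varphi_0$ absorbs only a small fraction of the curvature budget and whose Lelong number at $y$ exceeds $n$. A standard Skoda-type integrability estimate then forces $y \in V(\mathcal{J}(e^{-\varphi_0}))$.

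Second, I would iterate the cutting argument. Let $W \ni y$ be the minimal log-canonical subvariety through $y$ attached to the optimal rescaling of the current weight; by Kawamata's theorem $W$ is irreducible of some dimension $d$, and after a generic tie-breaking perturbation may be assumed smooth at $y$. If $d > 0$, use the very ampleness of $m(N+1)L|_W$ to find a section of a high power of $L|_W$ vanishing at $y$ to order at least $d+1$, extend it to $Y$ via the Ohsawa--Takegoshi $L^2$-extension theorem applied on a log-resolution of the pair, and combine with the current weight through a small coefficient to produce a new quasi-psh $\varphi_1$ whose minimal center through $y$ has dimension at most $d-1$. After at most $n$ such steps the minimal center collapses to $\{y\}$, and the telescoping of vanishing orders $1 + 2 + \cdots + n + 1 = N + 1$ is precisely what produces the coefficient in the statement.

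The main obstacle is tracking the curvature budget through the iteration so that each cutting step both strictly decreases the dimension of the log-canonical center through $y$ \emph{and} produces a weight with neat analytic singularities globally on $Y$. This requires solving a coupled system of linear inequalities in the rescaling coefficients of the successive contributions, and appealing to Ohsawa--Takegoshi with an auxiliary ample twist to control the $L^2$ norms of the extensions; neatness of the singularities is then inherited from the algebraicity of the sections, once a log-resolution of $(Y, \varphi)$ has been chosen at each stage. A secondary subtlety is the tie-breaking step, needed to make the minimal log-canonical center uniquely defined and its generic point smooth, which relies on a careful perturbation by a small general element of $|m(N+1)L|$.
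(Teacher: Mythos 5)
First, a point of context: the paper does not prove this statement at all --- it is quoted verbatim from Angehrn--Siu \cite{AS95} (see also Demailly's book), so there is no internal proof to compare with; what you have written is an outline of the Angehrn--Siu argument itself. Your outline does follow the standard strategy of that argument (an initial singular weight concentrated at $y$, inductive cutting of the minimal log-canonical center through $y$ after a tie-breaking perturbation, with the consumed positivity $1+2+\cdots+n$ accounting for $N=\frac{n(n+1)}{2}$), so the route is the right one.

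However, as a proof it has genuine gaps, and they sit exactly at the technical heart of \cite{AS95}, which you explicitly defer as ``the main obstacle.'' (i) The inductive step is not established: producing a section of a high power of $L|_W$ vanishing at $y$ to order $d+1$ and extending it does not by itself show that the multiplier ideal of the new, suitably rescaled weight has zero variety through $y$ of dimension $\le d-1$ while $y$ remains in that zero variety; one needs the comparison between non-integrability along $W$ and non-integrability on $Y$ near $y$ (the critical-exponent/semicontinuity argument of Angehrn--Siu, carried out in the original paper with Nadel-type vanishing and restriction rather than Ohsawa--Takegoshi; an OT-based variant is possible but must actually be run, together with the ``coupled system of linear inequalities'' you mention, to get the curvature lower bound $\frac{1-\varepsilon_0}{m(N+1)}\,\omega_Y$ with a uniform $\varepsilon_0$). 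Invoking the conclusion of the cutting lemma is essentially assuming the theorem. (ii) The numerology of your initial step is off: with the normalization $\sqrt{-1}\Theta_{L,h_L}=\frac{1}{m(N+1)}\omega_Y$, the curvature budget forces a rescaling of $\log\sum|s_i|^2$ by roughly $\frac{1}{km}$ for sections $s_i$ of $km(N+1)L$, so vanishing order $(n+1)k$ at $y$ yields Lelong number about $\frac{n+1}{m}$, not $>n$; the dimension count must be redone to produce vanishing order on the scale of $nmk$ (which it does permit, precisely because $N+1>n$), or else the induction must be started, as in \cite{AS95}, from the trivial weight with $Y$ itself as the first center. (iii) Finally, the statement requires $\varphi$ to have neat analytic singularities globally and $y$ to be an \emph{isolated} point of $V(\mathcal{J}(e^{-\varphi}))$; this follows in \cite{AS95} from the final rescaling/regularization step, which your sketch asserts (``inherited from algebraicity'') but does not carry out. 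In short: correct strategy, identical in spirit to the cited source, but the decisive lemmas are named rather than proved.
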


By the following theorem,
a relative pluricanonical line bundle $K_{X/Y}^{\otimes a}$ has a semipositive singular Hermitian metric which is equal to the fiberwise Bergman kernel metric.

\begin{thm}[\cite{BP08} Theorem 4.2 , \cite{PT14} Collary 4.3.2]
\label{BP}
Let $f \colon \hol{X}{Y} $ be a surjective morphism of smooth projective varieties.
Assume that there exists a regular value $y \in Y$ such that $H^0 \bigl( X_y , K_{X_y}^{\otimes a } \bigr) \not = 0$.
Then the bundle $K_{X/Y}^{\otimes a}$ admits a singular Hermitian metric $h_a$ with semipositive
curvature current such that for any regular value $w \in Y$ and any section 
$s \in H^0 \bigl( X_w , K_{X_w}^{\otimes a } \bigr)$ we have 
$$
| s | _{h_a}^{\frac{2}{a} } (z) \le \int_{X_w} | s | ^{ \frac{2}{a}}
$$
for any $z \in X_w$ up to the identification of $K_{X/Y} |_{X_w} $ with $K_{X_w}$.
We regard $ | s |^{ \frac{2}{a} }$ as a semipositive continuous $( m , m )$ form where $m = \dim X_w$.
\end{thm}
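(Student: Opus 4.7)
The plan is to construct $h_a$ as the fiberwise Bergman kernel metric on $K_{X/Y}^{\otimes a}$ over the regular locus $Y^\circ$ of $f$, to establish the semipositivity of its curvature via a Berndtsson--P\u{a}un type argument, and then to extend it as a singular Hermitian metric across the singular fibers.

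First, set $X^\circ \coloneqq f^{-1}(Y^\circ)$. For a regular value $w \in Y^\circ$ and a point $z \in X_w$, use the canonical identification $K_{X/Y}|_{X_w} \cong K_{X_w}$ to define the metric dually by
$$
|\xi|_{h_a}^{2/a}(z) \coloneqq \sup \Bigl\{ |s(z)|^{2/a} : s \in H^0\bigl(X_w , K_{X_w}^{\otimes a}\bigr),\ \int_{X_w} |s|^{2/a} \le 1 \Bigr\},
$$
with the supremum interpreted via the duality between the fiber of $K_{X/Y}^{\otimes a}$ at $z$ and the evaluation functional on $H^0$. The assumption $H^0(X_y, K_{X_y}^{\otimes a}) \neq 0$ guarantees, via the deformation invariance of plurigenera (Siu), that the same nonvanishing holds on every regular fiber, so the definition is nondegenerate. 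The inequality $|s|_{h_a}^{2/a}(z) \le \int_{X_w} |s|^{2/a}$ is immediate from this definition, applied to the normalization of the section $s$ itself.

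Second, I would prove that $h_a$ has semipositive curvature current on $X^\circ$. This is the core of the theorem and is where the work lies. The idea is to consider a local trivialization and a local section; one needs to show that $\log |s|_{h_a}$ is plurisubharmonic in the horizontal parameter. For $a = 1$ this is a direct application of the Berndtsson theorem on positivity of direct images: the Hilbert bundle $f_*(K_{X/Y} \otimes I)$ equipped with its natural $L^2$ metric is Nakano (hence Griffiths) semipositive, and the Bergman kernel metric inherits this. For general $a \geq 1$ the measure $|s|^{2/a}$ is not an $L^2$ object on the fiber, so one approximates: cover $X$ by Stein open sets, pick a local section $s$ of $K_{X/Y}^{\otimes a}$, and use an Ohsawa--Takegoshi extension (in the form used in \cite{BP08}) to extend fiberwise extremal sections holomorphically in the base direction while controlling their $L^{2/a}$ norms. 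The plurisubharmonicity of $\log |s|_{h_a}$ then follows by a concavity/maximum-principle argument on the Bergman kernel, as carried out in \cite[Theorem 4.2]{BP08} and \cite[Corollary 4.3.2]{PT14}.

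Third, I would extend $h_a$ across the analytic set $X \setminus X^\circ$. In a local holomorphic trivialization, $h_a = e^{-\varphi_a}$ with $\varphi_a$ psh on $X^\circ$. The extremal characterization together with the relative ampleness/boundedness of the fiberwise $L^{2/a}$ norms, combined with the upper semicontinuity of plurigenera, shows that $\varphi_a$ is locally bounded from above near the bad locus. Since $X \setminus X^\circ$ is a proper analytic subset, Grauert--Remmert removable singularity for psh functions provides a unique psh extension of $\varphi_a$ (after taking the upper semicontinuous regularization), and this defines the desired singular Hermitian metric on all of $X$ with semipositive curvature current.

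The main obstacle is the second step for $a \ge 2$: unlike the $a = 1$ case where Berndtsson's Hilbert bundle machinery applies directly, one must reinterpret the $L^{2/a}$ pairing to make the Ohsawa--Takegoshi extension argument go through and deduce the psh variation of the fiberwise Bergman kernels. Once this variation result is in hand, the existence of the metric and the asserted pointwise bound are formal consequences.
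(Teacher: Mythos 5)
The paper does not prove this statement: it is quoted as a known theorem of Berndtsson--P\u{a}un and P\u{a}un--Takayama, so there is no internal proof to compare against. Your outline faithfully reproduces the strategy of those references --- the fiberwise $a$-Bergman (extremal) metric, whose defining inequality yields the stated pointwise bound essentially by definition; plurisubharmonic variation of the Bergman kernel in the base direction via Ohsawa--Takegoshi extension; and extension of the weight across the non-regular locus after checking local boundedness from above. Two caveats: your second step, which carries the entire mathematical content, is ultimately deferred back to \cite[Theorem 4.2]{BP08} and \cite[Corollary 4.3.2]{PT14}, i.e.\ to the very statement being proved, so as a self-contained argument it is circular (though no worse than the paper's own treatment by citation); and your extremal formula has $\xi$ on the left but not on the right --- it should be phrased as $| s |_{h_a}^{2/a}(z) = |s(z)|^{2/a}/B_a(z)$ with $B_a$ the extremal function, from which the bound follows by testing $B_a$ against the normalization of $s$ itself.
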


\section{Proof of Theorem \ref{main}}
In this section, we prove Theorem \ref{main}.
\begin{thm}[ = Theorem \ref{main}]
Let $f \colon \hol{X}{Y} $ be a surjective morphism of smooth projective varieties,
with Y of dimension $n$, and $L$ be an ample line bundle on $Y$.
If $y$ is a regular value of $f$, then for any $a \geq 1$ the sheaf 
$$ f_{*}(K_{X}^{\otimes a} ) \otimes L^{\otimes b} 
$$  
is generated by the global sections at $y$ for all $b \geq \frac{n(n-1)}{2} + a(n+1)$.
\end{thm}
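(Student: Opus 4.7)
The plan is to show that any section $s \in H^{0}\bigl(X_{y}, K_{X_{y}}^{\otimes a} \otimes f^{*}L^{\otimes b}|_{X_{y}}\bigr)$ extends to a global section on $X$. This will be achieved by a twisted $L^{2}$ extension scheme: first extend locally by Ohsawa--Takegoshi (for instance Cao's version), then globalize by solving a $\overline{\partial}$-equation, using an injectivity theorem to kill the obstruction.

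The first step is a bookkeeping rewrite. Set $N = \tfrac{n(n+1)}{2}$ and $\overline{b} = b - \tfrac{n(n-1)}{2} - a(n+1) \geq 0$, and write
\[
K_{X}^{\otimes a} \otimes f^{*}L^{\otimes b} \;=\; K_{X} \otimes f^{*}L^{\otimes N+1} \otimes F,
\]
where $F \coloneqq K_{X/Y}^{\otimes (a-1)} \otimes f^{*}\bigl(K_{Y} \otimes L^{\otimes n+1}\bigr)^{\otimes (a-1)} \otimes f^{*}L^{\otimes \overline{b}}$. I equip $F$ with a singular Hermitian metric $h_{F}$ with semipositive curvature current, built from three ingredients: the fiberwise Bergman kernel metric $h_{a}$ on $K_{X/Y}^{\otimes a}$ from Theorem \ref{BP} (raised to the power $\tfrac{a-1}{a}$), a smooth semipositive metric on $K_{Y} \otimes L^{\otimes n+1}$ (which exists by Mori theory and Kawamata base-point freeness), and $h_{L}^{\overline{b}}$. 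Assuming $s \not\equiv 0$, Theorem \ref{BP} guarantees that $|s|_{h_{F}}^{2}$ is bounded above on $X_{y}$, so $s$ is $L^{2}$ with respect to $h_{F}$ on $X_{y}$.

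The second step is the extension mechanism. Apply Theorem \ref{AS} to the point $y$ to obtain $\varphi$, and pull back $\psi \coloneqq \varphi \circ f$. Using an Ohsawa--Takegoshi-type theorem one extends $s$ to a section $S_{U}$ of $K_{X} \otimes f^{*}L^{\otimes N+1} \otimes F$ over $U = f^{-1}(V)$ for a small neighborhood $V \ni y$. Choose a cutoff $\rho$ on $Y$, supported in $V$, with $y \notin \operatorname{supp}(\overline{\partial}\rho)$ and $\int_{\operatorname{supp}(\overline{\partial}\rho)} e^{-\varphi}\, dV_{Y} < \infty$, and set $\widetilde{\rho} = \rho \circ f$. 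Then $\overline{\partial}(\widetilde{\rho}\, S_{U})$ is a $\overline{\partial}$-closed $(d,1)$-form with values in $f^{*}L^{\otimes N+1} \otimes F$ that is $L^{2}$ with respect to both $h_{F}$ and $h_{F}e^{-\psi}$, the latter by the boundedness of $|S_{U}|^{2}_{h_{F}}$ on $U$ and the integrability condition on $\rho$.

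The core step is to apply the injectivity theorem of Cao--Demailly--Matsumura to conclude that the cohomology class $[\overline{\partial}(\widetilde{\rho}\, S_{U})]$ vanishes in $H^{1}\bigl(X, K_{X} \otimes f^{*}L^{\otimes N+1} \otimes F \otimes \mathcal{J}(h_{F}e^{-\psi})\bigr)$. The curvature calculation, which is the main technical obstacle, goes as follows: the contributions from $h_{a}^{(a-1)/a}$ and the smooth metric on $K_{Y} \otimes L^{\otimes n+1}$ are semipositive, so it suffices to check that $f^{*}L^{\otimes N+1}$ together with the twist $(1+\alpha\delta)\ddbar\psi$ is semipositive for all $\alpha \in [0,1]$ and strictly positive for $\alpha = 0$. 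This follows from Theorem \ref{AS}\,(1) after pullback by $f$, with a suitable small constant $\delta > 0$. Once the class is zero with the twisted weight, one solves $\overline{\partial}F_{0} = \overline{\partial}(\widetilde{\rho}\, S_{U})$ with $F_{0}$ square-integrable against $h_{F}e^{-\psi}$. Because $y$ is isolated in $V(\mathcal{J}(e^{-\varphi}))$ by Theorem \ref{AS}\,(2), the integrability forces $F_{0}|_{X_{y}} \equiv 0$ (otherwise $|F_{0}|^{2}_{h_{F}}$ has a positive lower bound on some open set where $e^{-\psi}$ is non-integrable). The desired global extension is then $\widetilde{\rho}\, S_{U} - F_{0}$, completing the proof.
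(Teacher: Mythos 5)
Your proposal is correct and follows essentially the same route as the paper: the same decomposition $K_X^{\otimes a}\otimes f^*L^{\otimes b}=K_X\otimes f^*L^{\otimes N+1}\otimes F$ with the Bergman-type metric $h_a^{(a-1)/a}$, the Angehrn--Siu weight $\psi=\varphi\circ f$, a local $L^2$ extension, and globalization via a cut-off, the Cao--Demailly--Matsumura injectivity theorem, and the vanishing of the correction term on $X_y$. The only differences are cosmetic (the paper invokes the $L^2$ extension of Hacon--Popa--Schnell rather than Cao's, and the curvature hypothesis needed is just semipositivity of $\sqrt{-1}\Theta_{h_{\overline{L}}}+(1+\alpha\delta)\ddbar\psi$ for $\alpha\in[0,1]$; strict positivity at $\alpha=0$ is neither needed nor available, since one only gets a multiple of $f^*\omega_Y$).
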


Let us first outline the proof.
It is enough to show that
for any regular value $y \in Y$, 
any section 
$s \in H^0 \bigl( X_y , K_{X_y}^{\otimes a } \otimes f^{*}(L )^{\otimes b} | _{X_y} \bigr)$  
can be extended to $X$.
By taking an appropriate singular Hermitian metric on $K_{X}^{\otimes a -1 } \otimes f^{*}(L^{\otimes b})$, 
we can prove there exists a section $S_U$ near $X_y$ such that $ S_U | _{X_y} = s $ by an $L^2$ extension theorem.
To extend $S_U$ to $X$, we solve a $\overline{\partial} $-equation with some weight.

\subsection{ Set up }
We fix a regular value $y \in Y$
and a section 
$s \in H^0 \bigl( X_y , K_{X_y}^{\otimes a } \otimes f^{*}(L )^{\otimes b} | _{X_y} \bigr)$.
We may assume $s \not = 0$.

Let $\omega_X$ be a \kah form on $X$.
We will denote by $h_L$ the smooth positive metric on $L$ and denote by $\omega_{Y}$ the \kah form on $Y$ as in Theorem \ref{AS}.
Since $K_Y \otimes L^{ \otimes n+1 }$ is semiample by Mori theory and Kawamata's basepoint free theorem
(see \cite[Theorem 1.13 and Theorem 3.3]{KM}),
there exists a smooth semipositive metric $h_s$ on $K_Y \otimes L^{ \otimes n+1 }$.
We take 
the singular Hermitian metric $h_{a}$ on $K_{X/Y}^{ \otimes a}$
as in Theorem \ref{BP}.

We will denote by  $\overline{L} \coloneqq K_{X/Y}^{ \otimes(a-1)} \otimes f^{*}(K_Y \otimes L^{ \otimes n+1 }) ^{\otimes a-1} \otimes f^{*} ( L^{ \otimes N+1+\overline{b}} )$ 
and  $\overline{b} \coloneqq b - \frac{n(n-1)}{2} - a(n+1) \geq 0$.
Define $h_{\overline{L}} \coloneqq h_{a}^{ \frac{a-1}{a} }f^{*}( h_{s}^{a-1} h_{L}^{N+1+\overline{b}}  )$,
which is a singular Hermitian metric on $\overline{L}$ with semipositive curvature current.
Note that $K_X \otimes \overline{L} = K_{X}^{\otimes a} \otimes f^{*}(L^{\otimes b}) $.

\subsection{ Local Extension }
We choose a coodinate neighborhood $V$ near $y$ and we set $U \coloneqq f^{-1}( V ) $.
We may regard $V$ as an open ball in $\mathbb{C}^n$ and $y$ as an origin in $\mathbb{C}^n$.
Since $ | s |^{2}_{h_{a}}$ is bounded above on $X_y$ by Theorem \ref{BP}, we obtain
\begin{align}
\begin{split}
\| s \|_{h_{ \overline{L} } , \omega_X }^{2} 
&= 
\int_{X_y} | s | ^{2}_{  h_{ \overline{L} }  , \omega_X} dV_{X_y , \omega_X } \\
&= 
C \int_{X_y} | s |^{ \frac{2(a-1)}{a} }_{h_{a}} |s|^{ \frac{2}{a} } _{\omega_X}  dV_{X_y , \omega_X} \\
&\leq 
C' \int_{X_y} |s|^{ \frac{2}{a} } _{\omega_X}  dV_{X_y , \omega_X} \\
&< +\infty,
\end{split}
\end{align}
where $C$ and $C'$ are some positive constants.
Therefore by the $L^2$ extension theorem
in \cite[Theorem 14.4]{HPS17},
there exists $ S_{U} \in H^{0} \bigl( U , K_{X} \otimes \overline{L} \otimes \mathcal{J}(h_{\overline{L}}) \bigr) $ such that $S_{U} | _{X_y} = s$.

\subsection{ Global Extension }
We denote by $\varphi$ the quasi-psh function on $Y$ as in Theorem \ref{AS} and 
denote by $ \psi \coloneqq \varphi \circ f $.
By Theorem \ref{AS}, we can take a cut-off function $\rho$ near $y$ such that 
\begin{enumerate}
\item ${ \rm { \rm supp } }(\rho) \subset \subset V$,
\item ${ \rm { \rm supp } }(\overline{\partial} \rho) \not \ni y$,
\item $\int_{{ \rm supp }( \overline{\partial} \rho )} e^{- \varphi } dV_{Y , \omega_{Y}} < + \infty $,
\end{enumerate}
and put $ \widetilde {\rho} \coloneqq \rho \circ f$.
We solve the global $\overline{ \partial }$-equation
$
\overline{ \partial } F = \overline{ \partial }( \widetilde{\rho} S_U) 
$
on $X$ with the weight of $ h_{\overline{L}} e^{- \psi}$.

It is easy to check
$\| \widetilde{ \rho } S_U \| ^{2} _{h_{\overline{L}}, \omega_{X} } < + \infty $ and 
$\| \overline{\partial} {(\widetilde{ \rho } S_U) } \|^{2}_{ h_{\overline{L}} , \omega_{X} }  < + \infty $.
Therefore $ \overline{\partial} ( \widetilde{\rho} S_U)  $ gives rise to a cohomology class 
$[  \overline{\partial} ( \widetilde{\rho} S_U) ] $
which is 
$ [\overline{\partial} ( \widetilde{\rho} S_U) ] =0 $
in $ H^{1}\bigl( X , K_X \otimes \overline{L} \otimes \mathcal{J}( h_{\overline{L}} ) \bigr) $.
Since $| S_U |^{2} _{ h_{\overline{L}}} $ is bounded above on $U$ by Theorem \ref{BP} 
( if necessary we take $U$ small enough ), 
we obtain
\begin{align}
\begin{split}
\| \overline{\partial} (\widetilde{ \rho} S_U )  \|^{2}_{ h_{\overline{L}} e^{- \psi}  , \omega_{X} } 
&=
\int _{ U }  | \overline{\partial} (\widetilde{ \rho}   S_U) |^{2} _{ h_{\overline{L}}e^ {- \psi } } dV_{X , \omega_{X} } \\
&\le
C \int_{ f^{-1}( { \rm supp }(\overline{\partial} \rho) )} | S_U |^{2} _{h_{\overline{L}} } e^{ - \psi } dV_{ X , \omega_{X} } \\
&\le
C' \int_{ f^{-1}( { \rm supp }(\overline{\partial} \rho) )} e^{ - \psi } dV_{ X , \omega_{X} } \\
&< + \infty, \\
\end{split}
\end{align}
where $C$ and $C'$ are some positive constants.
Therefore $ \overline{\partial} ( \widetilde{\rho} S_U ) $ is a $\overline{\partial} $-closed $( d , 1 )$ form with $\overline{L}$ value
which is square integrable with the weight of $h_{\overline{L}} e^{- \psi }$, where $d = \dim X $.

We put  $\delta \coloneqq \frac{1- \epsilon_0 }{ 2 (N + \epsilon_0)}$. 
Then we obtain
\begin{align}
\begin{split}
\sqrt{-1} \Theta_{h_{\overline{L}}, h_{\overline{L}}} + (1 + \alpha \delta)  \deldel \psi 
&=
\frac{a-1}{a} \sqrt{-1} \Theta_{ K_{X/Y} , h_{a} }
+ (a-1) f^{*} ( \sqrt{-1}  \Theta_{ K_Y \otimes  L^{ \otimes n+1} , h_s} ) \\
& \,\,\,\,\,\, +
(N+1+\overline{b})f^{*} (\sqrt{-1} \Theta_{ L , h_L} )
+(1+ \alpha \delta) \deldel \psi \\
&\geq 
f^{*} \Bigl( (N+1)\sqrt{-1} \Theta_{ L , h_L}
+(1+ \alpha \delta) \deldel \varphi \Bigr)  \\
&=
f^{*}\Bigl(  
(1+ \alpha \delta)
( \sqrt{-1}\Theta_{ L^{\otimes N+1} , h_{L}^{N+1} } +\deldel \varphi )  
-\alpha \delta \sqrt{-1} \Theta_{ L^{\otimes N+1} , h_{L}^{N+1} } 
\Bigr) \\
&\geq
\frac{ (2 -\alpha) (1- \epsilon_0 ) }{ 2m(N+1)}
f^{*}(\omega_{Y}) \\
&\geq 0
\end{split}
\end{align}
in the sense of current 
for any $\alpha \in [0 , 1]$.
Therefore by the injectivity theorem in \cite[Theorem 1.1]{CDM17}, the natural morphism 
$$
 H^{1}\bigl( X , K_X \otimes \overline{L} \otimes \mathcal{J}( h_{\overline{L}} e^{-\psi} ) \bigr) 
 \rightarrow
 H^{1}\bigl( X , K_X \otimes \overline{L} \otimes \mathcal{J}( h_{\overline{L}} ) \bigr)
 $$
 is injective.
Since
$ [ \overline{\partial} ( \widetilde{\rho} S_U ) ] = 0$  in 
$ H^{1}\bigl( X , K_X \otimes \overline{L} \otimes \mathcal{J}(h_{\overline{L}}  ) \bigr) $,
we obtain 
$ [ \overline{\partial} ( \widetilde{\rho} S_U ) ]  = 0$  in 
$ H^{1}\bigl( X , K_X \otimes \overline{L} \otimes \mathcal{J}( h_{\overline{L}}e^{-\psi} ) \bigr) $.
Hence we obtain a $(d,0)$ form $F$ with $\overline{L}$ value
which is square integrable with the weight of $h_{\overline{L}} e^{- \psi }$ such that
$\overline{ \partial } F = \overline{ \partial }( \widetilde{\rho} S_U) $,
that is we can solve $\overline{\partial } $ equation.

Now we show that $F | _{X_y} \equiv 0$.
To obtain a contradiction,  suppose that $F(x) \not = 0$ for some $x \in X_y$.
We may assume there exists an open set $W$ near $x$ such that $F(z) \not = 0$ for any 
$z \in W$
and $\int_{W} e^{- \psi} dV_{X , \omega_X} = + \infty$ 
since $y$ is an isolated point in the zero variety $V( \mathcal{J}(e^{-\varphi}) )$ by Theorem \ref{AS}.
Since there exists a positive constant $C$ such that
$| F | ^{2} _{ h_{\overline{L}}} \geq C $ on $W$, we have
\begin{align}
\begin{split}
+\infty
>
\| F \|^{2}_{h_{\overline{L}}e^{- \psi} } 
= 
\int_{X} | F | ^{2} _{ h_{\overline{L}}} e^{- \psi} dV_{ X , \omega_X} 
&\geq
\int_{W} | F | ^{2} _{ h_{\overline{L}}} e^{- \psi} dV_{ X , \omega_X} \\
&\geq
C \int_{W} e^{- \psi} dV_{ X , \omega_X} \\
&= + \infty,
\end{split}
\end{align}
which is impossible.

Hence we put $S \coloneqq \widetilde{\rho} S_U -F \in H^{0}\bigl( X , K_X \otimes \overline{L} \bigr) $,
then $S |_{X_y} = (\widetilde{\rho}S_U -F) |_{X_y} = s$,
which completes the proof.

\section{On a log case}
In this section, we prove Theorem \ref{main2}.

\begin{thm}[= Theorem \ref{main2}]
Let $(X , \Delta) $ be a Kawamata log terminal $\mathbb{Q}$-pair of a normal complex analytic variety in Fujiki's class $\mathcal{C}$ and an effective divisor, 
and $Y$ be a smooth projective $n$-dimensional variety.
Let $f \colon \hol{X}{Y} $ be a surjective morphism,
and $L$ be an ample line bundle on $Y$.
For any $a \geq 1$ such that $a(K_{X} + \Delta ) $ is an integral Cartier divisor, the sheaf 
$$ 
f_{*}\Bigl( \mathcal{O}_X \bigl( a(K_{X} + \Delta ) \bigr) \Bigr) \otimes L^{\otimes b} 
$$  
is generated by the global sections at a general point $y \in Y$ either 

\begin{enumerate}
\item for all $b \geq \frac{n(n-1)}{2} + a(n+1) $
, or
\item for all $b \geq \frac{n(n-1)}{2} + 2$ when $K_Y$ is pseudo-effective.
\end{enumerate}
\end{thm}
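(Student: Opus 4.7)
The plan is to reduce Theorem \ref{main2} to the smooth K\"ahler setting and then adapt the extension scheme of Theorem \ref{main} to account for the boundary divisor $\Delta$ and, in part (2), for the pseudo-effectivity of $K_Y$. First I would fix a projective log resolution $\pi \colon \tilde X \to X$, which exists because $X$ lies in Fujiki's class $\mathcal{C}$, arranged so that $\tilde X$ is a compact K\"ahler manifold. The KLT hypothesis allows one to write
\begin{equation*}
K_{\tilde X} + \tilde \Delta = \pi^{*}(K_X + \Delta) + E,
\end{equation*}
with $\tilde \Delta$ effective having coefficients in $[0,1)$ and $E$ an effective $\pi$-exceptional divisor. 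Setting $\tilde f \coloneqq f \circ \pi$, one has $f_{*}\bigl(\mathcal{O}_X(a(K_X + \Delta))\bigr) \simeq \tilde f_{*}\bigl(\mathcal{O}_{\tilde X}(a(K_{\tilde X} + \tilde \Delta) - aE)\bigr)$, so the task reduces to extending any section $\tilde s$ of $a(K_{\tilde X} + \tilde \Delta) - aE$ on a generic fiber $\tilde X_y$ to all of $\tilde X$.

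The extension is carried out along the lines of Section~3. I would build a log-fiberwise Bergman kernel singular Hermitian metric $h_a$ on $K_{\tilde X/Y}^{\otimes a} \otimes \mathcal{O}_{\tilde X}(a\tilde \Delta)$, in the spirit of Theorem \ref{BP}, with semipositive curvature current and with fiberwise $L^{2/a}$-integrable pointwise bound. The KLT property ensures that the canonical singular metric $h_{\tilde \Delta}$ on $\mathcal{O}_{\tilde X}(\tilde \Delta)$ satisfies $\mathcal{J}(h_{\tilde \Delta}^{1/a}) = \mathcal{O}_{\tilde X}$, which is what makes the pulled-back section $\tilde s$ integrable against $h_{\overline L}$ near $\tilde X_y$ once $h_{\overline L}$ is defined exactly as in Section~3 with $K_{X/Y}^{\otimes a}$ replaced by its log version. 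Then I would apply the $L^{2}$ extension theorem \cite[Theorem 14.4]{HPS17}, which holds in the compact K\"ahler setting, to extend $\tilde s$ to a neighborhood of $\tilde X_y$, cut off via the Angehrn--Siu quasi-psh function from Theorem \ref{AS}, and solve the twisted $\overline{\partial}$-equation by invoking the Cao--Demailly--Matsumura injectivity theorem on the compact K\"ahler manifold $\tilde X$. Tracking the line-bundle bookkeeping exactly as in Section~3 produces the bound in case (1).

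For case (2), when $K_Y$ is pseudo-effective, the strategy of \cite{Deng17} suggests replacing the semiample factor $K_Y \otimes L^{\otimes n+1}$ appearing in $h_{\overline L}$ by $K_Y$ itself, equipped with a singular semipositive Hermitian metric coming from its pseudo-effectivity; this absorbs the extra copies of $K_Y$ without the auxiliary $L^{\otimes n+1}$ twists and collapses the $a$-dependence of the bound. The main obstacle I expect is controlling the singularity profile of the log-Bergman kernel metric $h_a$ along $\tilde \Delta$ and along the exceptional divisor $E$: on one hand one needs the right integrability to push $\tilde s$ through the $L^{2}$ extension, and on the other the combined weight $h_{\overline L} e^{-\psi}$ must satisfy the curvature hypotheses of the injectivity theorem even after the pseudo-effective metric on $K_Y$ has been mixed in for case (2).
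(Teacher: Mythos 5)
Your reduction and your treatment of case (1) follow the paper's proof in all essentials: a log resolution $\mu\colon X'\to X$, the observation that the exceptional part can be discarded so that it suffices to extend sections of $K_{X'}^{\otimes a}\otimes\mathcal{O}_{X'}(\sum a\beta_jF_j)\otimes f^{*}L^{\otimes b}$ from a general fiber, Cao's Bergman-type metric $h_{a,B}$ on $K_{X'/Y}^{\otimes a}\otimes\mathcal{O}_{X'}(\sum a\beta_jF_j)$ from \cite[Theorem 3.5]{Cao}, the triviality of the multiplier ideal of the boundary metric coming from the klt and snc conditions, and then the Section 3 machinery ($L^2$ extension over $f^{-1}(V)$, Angehrn--Siu cut-off, the injectivity theorem of \cite{CDM17}). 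The only step you leave implicit there is how ``general point'' enters: one needs the restriction theorem for multiplier ideals (\cite[Section 9.5.D]{Laz}) to know that $\mathcal{J}(h_F^{1/a}|_{X'_y})=\mathcal{J}(h_F^{1/a})|_{X'_y}=\mathcal{O}_{X'_y}$ for general $y$, so that the section on the fiber is square-integrable against $h_{\overline L}$; this is a routine addition.

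In case (2), however, your plan deviates from the paper in a way that creates a genuine gap. You propose to equip $K_Y$ itself with a singular semipositive metric coming only from pseudo-effectivity. Such a metric's polar set is in general only pluripolar and need not be contained in any proper analytic subset of $Y$, so you cannot conclude surjectivity of the restriction map at a Zariski-general point $y$, nor can you guarantee that the general fiber avoids the poles and that the multiplier ideal of $f^{*}h_{K_Y}^{\,a-1}$ restricts trivially to $X'_y$ --- precisely the compatibility you flag at the end but do not resolve. The paper's device is different: since $K_Y$ is pseudo-effective, $K_Y^{\otimes(a-1)}\otimes L$ is big, hence carries a metric $h_Y$ with \emph{neat analytic singularities} and positive curvature current; its polar set $\{h_Y=+\infty\}$ is then Zariski closed, a general $y$ avoids it, and the Section 3 argument applies verbatim with $h_{\overline L}=h_{a,B}^{(a-1)/a}h_F^{1/a}f^{*}(h_Yh_L^{\overline b})$. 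This costs exactly one extra copy of $L$ beyond the $N+1$ copies reserved for the Angehrn--Siu weight (the paper takes $\overline b=b-N-2\geq 0$), which is where the ``$+2$'' in the bound originates. Your sketch, by dropping the $(K_Y\otimes L^{\otimes n+1})^{\otimes a-1}$ factor without saying what replaces the lost $L$-twists or how many copies of $L$ remain for the Angehrn--Siu weight, never actually produces the claimed numerical bound, and as it stands the ``general point'' conclusion does not follow.
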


\begin{proof}
The proof is similar to Theorem \ref{main} and \cite[Theorem C]{Deng17}.
Take a log resolution $\mu \colon \hol {X'}{X}$ of $( X , \Delta)$
we have a compact \kah manifold $X'$
such that $$ K_{X'} = \mu^{*} ( a(K_{X} + \Delta) ) + \sum a\alpha_i E_i -\sum a\beta_j F_j , $$
where $a\alpha_i , a\beta_j \in \mathbb{N}_{+} $ and $\sum_{i} E_i + \sum_{j} F_j$ has simple normal crossing supports.
Since $(X , \Delta) $ is a Kawamata log terminal $\mathbb{Q}$-pair and $\Delta$ is effective,
$E_i$ is an exceptional divisor and $0 < \beta_j <1$.
We denote by $f' \coloneqq f \circ \mu$, which is a surjective morphism between compact \kah manifolds.
Since $E_i$ is an exceptional divisor, the natural morphism
\begin{align}
\begin{split}
H^0 \bigl( X' , \mu^{*} \bigl(  \mathcal{O}_X \bigl( a(K_{X} + \Delta ) \bigr) \otimes f^{*}(L ^{\otimes b} ) \bigr) \bigr)
&\rightarrow
H^0 \bigl( X' ,  \mu^{*} \bigl(  \mathcal{O}_X \bigl( a(K_{X} + \Delta ) \bigr) \otimes f^{*}(L ^{\otimes b} ) \bigr)
\otimes \mathcal{O}_{X'}( \sum a \alpha_i E_i ) \bigr)   \\
&=
H^{0}\bigl( X' , K_{X'}^{\otimes a}  \otimes f'^{*}(L )^{\otimes b} \otimes \mathcal{O}_{X'}( \sum a \beta_j F_j  )  \bigr)\\
\end{split}
\end{align}
is isomorphism. Thus it is enough to show that 
for any general point $y \in Y$, the restriction map  
$$
\pi_{y} \colon
\hol{ H^0\bigl( X' , K_{X'}^{\otimes a} \otimes f^{*}(L^{\otimes b})
 \otimes \mathcal{O}_{X'}( \sum a \beta_j F_j ) \bigr) }
{ H^0 \bigl( X'_y , K_{X'_y}^{\otimes a } \otimes f^{*}(L )^{\otimes b} | _{X'_y}
 \otimes \mathcal{O}_{X'} (\sum a \beta_j F_j ) |_{X'_y} \bigr) }
$$
is surjective.
\\

In case (1),
we choose the canonical singular Hermitian metric $h_F$ on $\mathcal{O}_{X'}(  \sum a\beta_j F_j ) $ 
as in \cite[Example 3.13]{Dem}. We obtain $ \mathcal{J}( h_F^\frac{1}{a} ) = \mathcal{O}_{X'} $ since
 $\sum_{i} E_i + \sum_{j} F_j$ has simple normal crossing supports and $0 < \beta_j <1$.
By \cite[Theorem 3.5]{Cao}, there exists an $a$-th Bergman type metric $h_{a, B}$ on 
$K_{X'/Y} ^{\otimes a} \otimes \mathcal{O}_{X'}( \sum a \beta_j F_j )$. 
We note that 
for any general point $y $ of $f$ such that
$ \mathcal{J}( h_F^\frac{1}{a} |_{X'_y}) = \mathcal{O}_{X'_y}$, and for any section
$s' \in H^{0}( X'_y , K_{X'_y} ^{\otimes a} \otimes \mathcal{O}_{X'}( \sum a \beta_j F_j )|_{X'_y} )$, we have $| s' |^\frac{2}{a}_{ h_{a, B} } \le \int_{X'_y } |s'|^\frac{2}{a} _{h_{F}} < + \infty$
on $X'_y$ by \cite[Theorem 3.5]{Cao}.

We will denote by $\overline{L} \coloneqq K_{X'/Y}^{ \otimes a-1} \otimes \mathcal{O}_{X'}( \sum a \beta_j F_j )  \otimes f^{*}(K_Y \otimes L^{ \otimes n+1 }) ^{\otimes a-1} \otimes f^{*} ( L^{ \otimes \overline{b}} )$ 
and  $\overline{b} \coloneqq b - \frac{n(n-1)}{2} - a(n+1) \geq 0$.
Define a singlar Hermitian metric $h_{\overline{L}} \coloneqq h_{a , B}^{ \frac{a-1}{a} } h_F^\frac{1}{a} f^{*}( h_{s}^{a-1} h_{L}^{\overline{b}}  )$ on $\overline{L}$.
If $y$ is a general point in $Y$ such that
$ \mathcal{J}( h_F^\frac{1}{a} |_{X'_y}) = \mathcal{O}_{X'_y}$, the restriction map $\pi_{y}$ is surjective 
since the same proof works as in Section 3. 
By \cite[section 9.5.D]{Laz},
$\mathcal{J}( h_F^\frac{1}{a} |_{X'_y}) = \mathcal{J}( h_F ^\frac{1}{a}) | _{X'_y} = \mathcal{O}_{X'_y}$ for any general point $y \in Y$. 
Therefore $\pi_{y}$ is surjective for any general point $y \in Y$, which completes the proof.
\\

In case (2),
since $K_Y$ is pseudo-effective, $K_Y ^{\otimes a-1} \otimes L$ is a big line bundle.
Therefore, 
there exists a singular Hermitian metric $h_Y$ on $K_Y ^{\otimes a-1} \otimes L$
with neat analytic singularities 
such that $\sqrt{-1} \Theta_{ K_Y ^{\otimes a-1} \otimes L, h_Y} > 0 $ in the sense of current.

We will denote by $\overline{L} \coloneqq K_{X'/Y}^{ \otimes a-1} \otimes \mathcal{O}_{X'}( \sum a \beta_j F_j )  \otimes f^{*}(K_Y ^{\otimes a-1} \otimes L) \otimes f^{*} ( L^{ \otimes \overline{b}} )$ 
and  $\overline{b} \coloneqq b - N - 2 \geq 0$.
Define a singular Hermitian metric $h_{\overline{L}} \coloneqq h_{a , B}^{ \frac{a-1}{a} } h_F^\frac{1}{a} f^{*}( h_Y h_{L}^{\overline{b}}  )$ on $\overline{L}$.
If $y$ is a general point in $Y$ such that $y \not \in \{ z \in Y \colon h_Y(z) = + \infty \} $ and
$ \mathcal{J}( h_F^\frac{1}{a} |_{X'_y}) = \mathcal{O}_{X'_y}$, the restriction map $\pi_{y}$ is surjective since the same proof works as in Section 3. Since the set $\{ z \in Y \colon h_Y(z) = + \infty \}  $ is Zariski closed, then $\pi_{y}$ is surjective for any general point $y \in Y$, which completes the proof.

\end{proof}

\end{document}